%

\documentclass[bj]{imsart}

\RequirePackage{amsthm,amsmath,amsfonts,amssymb}
\RequirePackage[numbers]{natbib}

\startlocaldefs

\usepackage{graphicx}

\usepackage[colorlinks=false,pdfborderstyle={/W 1}]{hyperref}

\newif\ifhyper\IfFileExists{hyperref.sty}{\hypertrue}{\hyperfalse}
\ifhyper\usepackage{hyperref}\fi

\def\arXiv#1#2{\href{http://arxiv.org/abs/#1}{{\tt arXiv:#1 [#2]}}}

\def\given{\,|\,} 
\def\bgiven{\;\big|\;}

\def\Var{\mathrm{Var}}



\newtheorem{theorem}{Theorem}
\newtheorem{question}[theorem]{Question}

\newtheorem{lemma}[theorem]{Lemma}
\newtheorem{proposition}[theorem]{Proposition}

\theoremstyle{remark}
\theoremstyle{definition}


\newcommand{\C}{\mathcal{C}}
\newcommand{\LL}{\mathcal{L}}
\newcommand{\Z}{\mathbb{Z}}
\newcommand{\N}{\mathbb{N}}

\def \eps {\epsilon}
\def \P {{\Bbb P}}
\def \E {{\Bbb E}}

\def\max{{\rm max}}

\newcommand{\TT}{\mathfrak{T}}
\newcommand{\T}{\mathcal{T}}

\newcommand{\CC}{\Pi}
\newcommand{\BB}{\Pi}

\newcommand{\R}{R}
\newcommand{\F}{F}
\newcommand{\FF}{\mathfrak{F}}
\newcommand{\Can}{{\rm Can}}

\def\Gstar{{{\cal G}_{*}}}

\def\omps{{\omega_\delta^\eps}}

\def\cE{\mathcal{E}}

\endlocaldefs

\begin{document}

\begin{frontmatter}
\title{Finite-energy infinite clusters without anchored expansion}
\runtitle{Finite-energy infinite clusters without anchored expansion}

\begin{aug}
\author[A]{\fnms{G\'abor} \snm{Pete}\ead[label=e1]{robagetep@gmail.com}}
\and
\author[B]{\fnms{\'Ad\'am} \snm{Tim\'ar}\ead[label=e2]{madaramit@gmail.com}}
\address[A]{Alfr\'ed R\'enyi Institute of Mathematics, Budapest, Hungary\\ 
and Budapest University of Technology and Economics, Budapest, Hungary\\
\printead{e1}}

\address[B]{University of Iceland, Reykjavik, Iceland\\ 
and Alfr\'ed R\'enyi Institute of Mathematics, Budapest, Hungary\\ 
\printead{e2}}
\end{aug}

\begin{abstract}
Hermon and Hutchcroft have recently proved the long-standing conjecture that in Bernoulli$(p)$ bond percolation on any nonamenable transitive graph $G$, at any $p > p_c(G)$, the probability that the cluster of the origin is finite but has a large volume $n$ decays exponentially in $n$. A corollary is that all infinite clusters have anchored expansion almost surely. They have asked if these results could hold more generally, for any finite energy ergodic invariant percolation. We give a counterexample, an invariant percolation on the 4-regular tree.
\end{abstract}

\begin{keyword}
\kwd{anchored expansion}
\kwd{invariant percolation}
\end{keyword}

\end{frontmatter}


\section{Introduction}

This paper gives a negative answer to the following recent question on invariant bond percolations on nonamenable transitive graphs. We refer the reader to \cite{PGG} for background, but will briefly recall the basic definitions and motivations after the question. 

\begin{question}[Hermon and Hutchcroft, Question 5.5 in \cite{HH}]\label{q.HeHu}
Let $G$ be a nonamenable unimodular transitive graph, and let $\omega$ be an ergodic invariant bond percolation process. Apply an $\epsilon>0$ of Bernoulli noise to $\omega$ to get a new invariant percolation configuration $\omega'$; i.e., we take the symmetric difference of $\omega$ and a Bernoulli$(\eps)$ bond percolation.
\begin{enumerate}
\item[{\bf (A)}] If $\omega'$ has infinite clusters, must these infinite clusters have anchored expansion?  
\item[{\bf (B)}] Is the probability that the origin lies in a finite cluster of $\omega'$ of size at least $n$ exponentially small?
\end{enumerate}
\end{question}

A bounded degree infinite graph $G=(V,E)$ is called {\it nonamenable} if the boundary-to-volume ratio $|\partial_E K| / |K|$ stays above some $c>0$ for every finite subset $K \subset V(G)$ of the vertices, where $\partial_E K$ is the set of edges with one endpoint in $K$ the other in $K^c$. As a relaxation of this property, the graph is {\it anchored nonamenable}, or in other words, it has {\it anchored expansion}, if 
$$
\iota^*_o := \inf \left\{ \frac{|\partial K|}{|K|} : o \in K \subset V(G)\textrm{ connected finite sets} \right\} > 0
$$
holds for some (and then, for any) anchor $o \in V(G)$.

An {\it invariant bond percolation} on an infinite transitive graph $G$ is just a random subset of the edges whose distribution is invariant under the automorphism group of $G$. Some standard examples, beyond Bernoulli$(p)$ bond percolation \cite{BS}, are the free or wired infinite volume FK$(p,q)$ random cluster models \cite{Lyons}, random interlacements \cite{TT}, the edges spanned by the open vertices of any invariant site percolation model (e.g., the Ising model \cite{TomIsing}, or the super-level sets of an invariant height function, such as the discrete Gaussian Free Field \cite{GFF,AC}), or processes obtained by local modifications of the above (such as factor of iid percolations \cite{Lfiid,BV}). The references given here are somewhat ad hoc; we have tried to give papers that focus on these processes on transitive graphs beyond $\Z^d$.

That the {\it subcritical phase} of most of the above processes is well-behaved in the sense that correlations and/or cluster sizes decay exponentially fast on any transitive graph is relatively well-understood by now \cite{ABF,DCT,DCRT}. In the {\it supercritical phase}, we need a truncation, such as a conditioning on the cluster of the origin to be finite. And, the results are more subtle: even for Bernoulli percolation on amenable transitive graphs such as $\Z^d$, because of the vanishing boundary-to-volume ratio, the cluster size does not have an exponential decay; on the other hand, with some non-trivial ways to measure the size of the boundary, one can sometimes get an exponential decay for that, which is still very useful; see \cite{KZh,Pet,BST}. For non-amenable transitive graphs, the need for a subtle definition does not arise, but the proofs are still harder. Hermon and Hutchcroft \cite{HH} have only recently proved the long-standing conjecture (probably first stated explicitly in \cite{BST}) that for Bernoulli$(p)$ bond percolation on any nonamenable transitive graph $G$, at any $p > p_c(G)$, the answer to Question~\ref{q.HeHu}~(B) is affirmative. 

The notion of anchored expansion was first explicitly defined by Benjamini, Lyons and Schramm in \cite{pertu}; more general anchored isoperimetric inequalities appeared implicitly in \cite{Tho}, explicitly in \cite{Pet}. See Section 6.8 of \cite{LP} for further background. The motivation is the obvious theoretical and practical interest in the robustness of large-scale geometric properties of transitive graphs under reasonable random perturbations. Infinite clusters in Bernoulli percolation on transitive graphs cannot satisfy any non-trivial isoperimetric inequalities, but often satisfy the weaker anchored counterparts, which still have implications, e.g., on the behavior of random walk on the cluster: anchored $(2+\eps)$-dimensional isoperimetry implies transience \cite{Tho}, while anchored non-amenability implies an on-diagonal  heat kernel decay of $p_n(x,x)\leq \exp(-cn^{1/3})$ and positive speed of escape \cite{Virag}. For Bernoulli percolation, an affirmative answer to Question~\ref{q.HeHu}~(A) was conjectured in \cite{BLS}, while the connection between anchored isoperimetry and supercritical exponential decay was pointed out by the first author \cite{CPP,Pet}: a positive answer to Question~\ref{q.HeHu}~(B) implies a positive answer to question (A) for Bernoulli percolation, and more generally, for any independent perturbation of an invariant process.

The proof of property (B) by Hermon and Hutchcroft \cite{HH} seemed to use quite mildly the independence in Bernoulli percolation, hence it was reasonable to hope that the argument could generalize to any {\it  finite energy} ergodic invariant percolation, like most of the models mentioned above. Instead of defining here this ``finite energy'' condition precisely (also called uniform insertion and deletion tolerance; see \cite[Section 12.1]{PGG}), let us just assume a stronger version, as given by Question~\ref{q.HeHu}: the process is an independent perturbation of an invariant process. However, even in this setting, we will show that the answer to~(A), and hence also to~(B), is negative:

\begin{theorem}\label{main}
There exists an invariant percolation on the 4-regular tree with the property that for $\delta,\eps>0$ small enough, after adding a Bernoulli($\eps$) set of edges and removing a Bernoulli($\delta$) set of edges, conditioned on the component of a fixed vertex to be infinite, the cluster has no anchored expansion almost surely. 
\end{theorem}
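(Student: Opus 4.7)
The strategy is to construct $\omega$ so that, with positive probability on $\{o \in C^\infty\}$, the infinite cluster $C^\infty$ of $\omega'$ is a \emph{one-ended} subtree of $T_4$. From such a cluster, let $R = o=v_0, v_1, v_2, \dots$ be the unique infinite ray of $C^\infty$ starting at $o$, and let $K_n$ be the connected component of $o$ in $C^\infty \setminus \{v_n v_{n+1}\}$. Because $C^\infty$ is one-ended, the infinite direction is on the far side of the cut edge, so $K_n$ is finite, with $|\partial_{C^\infty} K_n| = 1$ and $|K_n| \geq n+1$. Thus $|\partial K_n|/|K_n| \to 0$, giving $\iota^*_o = 0$.

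\textbf{Step 1: Construction of $\omega$.} Using iid uniform labels $(X_v)_{v \in V(T_4)}$, I would define an $\Aut(T_4)$-invariant orientation of $T_4$ in which each vertex $v$ has out-degree exactly $1$ (say, $v$'s out-edge points to the neighbor with smallest label). Let $\omega_0$ be the underlying undirected subgraph; its components are subtrees of $T_4$, each with exactly one end. I would then enlarge $\omega_0$ to $\omega$ by invariantly attaching robust ``blobs'' --- finite subtrees of $T_4 \setminus \omega_0$ --- at a stationary point process of vertices with heavy-tailed spacings. The blobs are chosen large enough that they survive Bernoulli$(\delta)$ deletions as connected units and provide a robust backbone capable of bridging the fragile single edges of the ancestor ray.

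\textbf{Step 2: $\omega'$ has a one-ended infinite cluster with positive probability.} For $\varepsilon, \delta > 0$ chosen sufficiently small (depending on blob size and spacing tail), I would argue (i) each blob stays connected after $\delta$-deletions; (ii) the $\varepsilon$-additions between consecutive blobs along the ancestor direction succeed with high enough probability that the infinite ancestor ray persists, reconnecting the pieces broken by $\delta$-deletions; and (iii) $\varepsilon$-additions transverse to the ancestor direction are subcritical, so they only contribute finite side-branches and introduce no second end. A branching-process comparison then shows that, with positive probability, $o$'s $\omega'$-cluster is infinite and, crucially, still one-ended.

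\textbf{Step 3: Conclude.} On the positive-probability event that $o \in C^\infty$ and $C^\infty$ is one-ended, the sets $K_n$ from the opening paragraph give $|\partial K_n|/|K_n| \to 0$, so $\iota_o^* = 0$. Ergodicity of the joint process $(\omega, \text{noise})$ upgrades this to almost-sure failure of anchored expansion, conditional on the cluster being infinite.

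\textbf{Main obstacle.} The heart of the proof is Step 2. In a tree, every Bernoulli$(\delta)$ deletion physically breaks the thin ancestor ray into finite pieces, so the infinite one-ended structure of $\omega_0$ cannot literally survive; the one-endedness must be rebuilt by the Bernoulli$(\varepsilon)$ additions, and this has to be done without accidentally producing a second end through the supercritical-looking branching that typical $\varepsilon$-additions produce in a nonamenable tree. Carefully calibrating the blob size and spacing so that the backbone-reconnection is supercritical while off-backbone exploration remains subcritical, and arranging all components of the construction ($\omega_0$, the heavy-tailed renewal markers, and the blob attachments) to be jointly $\Aut(T_4)$-invariant via a factor-of-iid, is the principal technical challenge.
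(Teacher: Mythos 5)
Your strategy hinges on making the infinite cluster of $\omega'$ one-ended, and this is impossible, not merely hard. Once you add Bernoulli$(\varepsilon)$ edges the process $\omega'$ becomes insertion-tolerant, and once you delete Bernoulli$(\delta)$ edges the two-point function on a tree decays: $\P(u\lra v)\le(1-\delta)^{\mathrm{dist}(u,v)}$ since the unique path between $u$ and $v$ must avoid $\eta_\delta$. This rules out a unique infinite cluster, so by the Burton--Keane/Newman--Schulman trichotomy for insertion-tolerant invariant percolation on a unimodular transitive graph, $\omega'$ has either $0$ or $\infty$ infinite clusters. In the nonamenable case with infinitely many infinite clusters, insertion tolerance forces every infinite cluster to have $2^{\aleph_0}$ ends (Benjamini--Lyons--Schramm, Lyons--Schramm; see \cite[Section 6.3]{PGG} or \cite[Chapters 7--8]{LP}). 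So the cluster you condition on can never be one-ended, no matter how $\omega_0$, the blobs, or the renewal markers are calibrated. You also flag, but cannot resolve, a more elementary version of the same obstruction in Step~2: in a tree there is \emph{no} alternative path around a $\delta$-deleted edge of the ancestor ray, so $\varepsilon$-additions cannot ``reconnect'' the broken ray --- they can only replace it by detours, which is exactly what produces extra ends.

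The paper's construction accepts that the infinite cluster will have infinitely many ends and defeats anchored expansion by a different mechanism: it attaches, inside the \emph{ambient} unimodular tree but outside $\omega$ itself, a unimodular family of finite ternary ``bags'' $T_v$ of unbounded depth (depth governed by heavy-tailed labels $\xi$ on the canopy). The percolation $\omega$ consists of one-ended canopy trees glued by supercritical $3$-regular trees, so $\omega'$ is infinite with positive probability even without help from $\varepsilon$; the role of the Bernoulli$(\varepsilon)$ noise is solely to occasionally activate a bag of depth $\ge n$ near the root, producing a set $H_n\subset\omega'$ with $|H_n|\ge 2^n$ but $|\partial H_n|=O(n)$, because the bag hangs from a single vertex and the $\varepsilon$-percolation inside the bag is subcritical in the ``outward'' direction (so $|\partial H_n|$ is not inflated). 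Combined with a short search (of length $O(n^2)$, using that the $3$-regular scaffold is supercritical) for an $n$-nice vertex near $o$, this gives an anchored F{\o}lner sequence. This decoupling of roles --- let the $3$-regular scaffold keep the cluster infinite, let $\varepsilon$ pick up large low-boundary bags --- is the idea your proposal is missing, and it is needed precisely because one-endedness is unavailable.
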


Of course, this counterexample leaves it open whether standard finite energy invariant percolations, such as the FK random cluster model and the other models mentioned above, satisfy the properties in Question~\ref{q.HeHu}. 
\medskip

We assume that the reader is familiar with the notion of unimodular random rooted graphs; see \cite{AL, BC, PGG} for background. We recall the definition for the case of regular graphs, because it will be needed at one point of the proof which is not standard.

Consider an element of the form $(G,o;\alpha)$, where $G$ is a connected locally finite graph, $o$ is a distinguished vertex ({\it root}), and $\alpha$ is a subset of the edges $E(G)$, which can also be thought of as a {\it mark} on certain edges. Say that two such objects are equivalent, if there is a rooted graph isomorphism that takes one of them to the other and preserves the marks. Call the set of these equivalence classes $\Gstar$. In notation we will not distinguish between the equivalence class and a particular element representing it. Also, one may have more than one type of marks given, say $\alpha$ and $\beta$, in which case it will be convenient to list them all after the semicolon and write the element of $\Gstar$ as $(G,o;\alpha,\beta)$. For the sake of simplicity, by a slight abuse of notation, we will also use $\Gstar$ for this space of rooted graphs, when two distinguished subsets of edges are given as marks.
Now, let $\mu$ be a probability measure on $\Gstar$, and suppose that $G$ is regular $\mu$-almost surely. Then we call $\mu$ (or the random graph that it samples) {\it unimodular}, if for a uniformly chosen neighbor $x$ of $o$, the doubly rooted graph $(G,o,x;\alpha)$ has the same distribution as $(G,x,o;\alpha)$. See \cite{BC} for the equivalence of this definition and the more usual one, and also for the more general (nonregular) case, where a rebias by the degree of the root is needed.

\section{Construction of one component in an invariant percolation}

Let $\C$ be the canopy tree of degrees 1 and 4, a standard example of a unimodular random tree (see, e.g., \cite[Chapter 14]{PGG}), and a building block of many unimodular counterexamples \cite{BPT,souvlaki,AH}. Call the set $\LL_0$ of leaves {\it level $0$}, and the set $\LL_i$ of vertices at distance $i$ from $\LL_0$ {\it level} $i$. For every $v\in\LL_i$ and $j\geq 0$ there is a unique vertex $w\in\LL_{i+j}$ at distance $j$ from $v$. Call this vertex the $j${\it -grandparent} of $v$, and also say that $v$ is a $j${\it -grandchild} of $w$. 

Fix $p_i=4^{-i}$ for $i\in \N$. For every vertex $v$ of $\C$, where $v\in\LL_i$, define a Bernoulli($p_i$) random variable $\xi_v$, and let all the $\xi_v$ be independent from the others. For $x\in\LL_0$, define 
$$m(x):=\max \big\{ i : \xi_w=1\textrm{ for the $i$-grandparent $w$ of $x$}\big\}.$$
For every $x\in\LL_0$, define a finite ternary tree $T_x$ of depth $m(x)$ starting from root $x$ (that is, $x$ has degree 3 in $T_x$, every vertex at distance at most $m(x)-1$ has degree 4, and every vertex at distance $m(x)$ has degree 1). 
Let the $T_x$ be all disjoint from each other and from $\C$, apart from $x$. Say that a $y\in T_x$ has type $i$ if $m(x)=i$. 
Define the tree $\C^+:= \C \cup \bigcup_{x\in\LL_0} T_x$. Note that if we are only given $\C^+$, we can still identify $\C$ with probability 1. (Starting from an arbitrary leaf $x$ of $\C^+$, take the first vertex $y$ separating $x$ from infinity such that there is a subgraph of $\C^+$ that is isomorphic to $\C$ and has $y$ as a leaf.)
Extend the definition of $T_v$ to every $v\in V(\C)$ as the graph induced by the union of $\{v\}$ and all the finite components of $\C^+\setminus\{v\}$.

\begin{figure}[htbp]
\centerline{
\includegraphics[width=\textwidth]{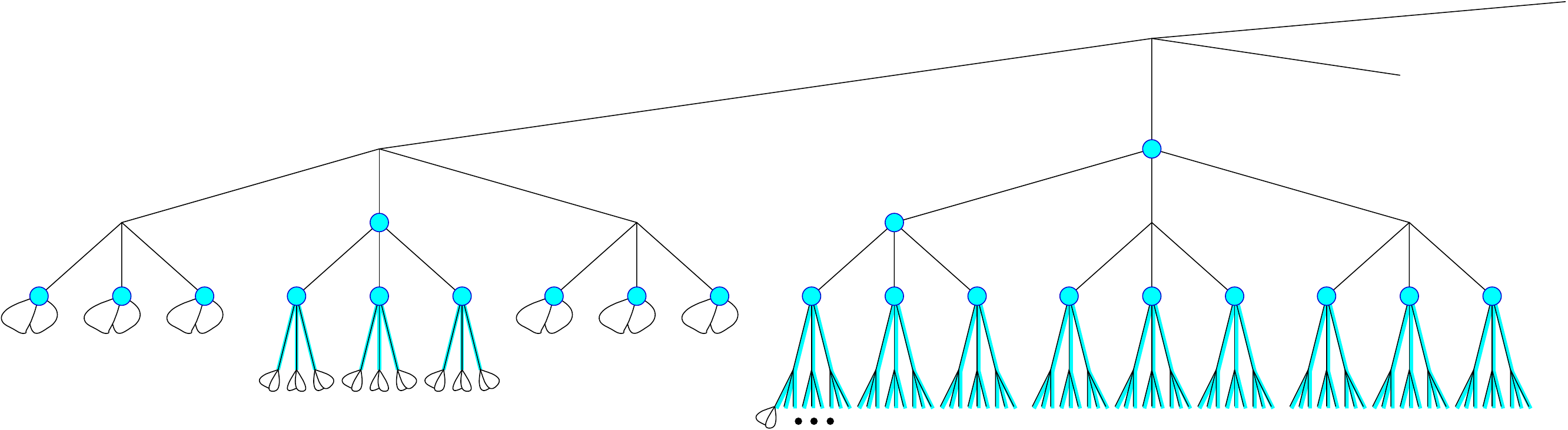}
}
\caption{Without the loops at the leaves, this is a copy of the tree $\C^+$, with the edges of $\C^+\setminus \C$ shown in turquoise. The vertices $v$ of $\C$ with $\xi_v=1$ are also colored turquoise. Together with the loops (where triples of loops are symbolized by the ``double-petals''), this is $\C^{++}$, whose colored covering tree, with the turquoise edges removed, is the invariant percolation $\omega$ on the 4-regular tree.}
\label{f.canoplus}
\end{figure}

The tree $\C$ can be turned into a unimodular random graph by picking the root to be a vertex in $\LL_i$ with probability proportional to $3^{-i}$. Using the fact that $\P(m(x) > i) < 4^{-i}$ holds for any $x\in\LL_0$, we have $\E (|T_x|)=\E (\sum_{i=0}^{m(x)} 3^{i})<\infty$, hence we can conclude that $(C^+,o)$ is also unimodular with a suitably chosen random root $o$; see Subsection 1.4 in \cite{BPT}.

\section{Construction of the invariant percolation}

Now we first construct an invariant percolation $\FF$ on the 4-regular tree $\TT$ where the component of a fixed root has the same distribution as the unimodular random graph that we constructed earlier. (It is tempting to apply \cite{BLS}, where a general such construction is given, but we will use a different argument because we want some extra properties to hold regarding the location of the components with respect to each other. Also, in our case all degrees of the unimodular graph are 1 or 4, making it easier to represent it as an invariant percolation.) Moreover, we will do it so that the components of the percolation will be isomorphic to each other: we first sample $\C^+$, and then fit infinitely many pairwise disjoint copies of this sample into $\TT$ in such a way that these copies cover every vertex of $\TT$. We will then use $\FF$ to define $\omega$, the invariant percolation of Theorem~\ref{main}.

Fix a random instance of $(\C^+,o)$. For a slick definition of $\FF$ and $\omega$, proceed as follows. To every leaf of $\C^+$, add 3 extra oriented loop-edges; let the set of all these loop-edges be $O$. The resulting random graph $\C^{++}:=\C^+\cup O$ is 4-regular, hence the 4-regular tree $\TT$ is the universal cover of $\C^{++}$. Fix a random covering map: a fixed vertex $o_{\TT} \in V(\TT)$ is mapped to the root $o\in V(\C^{++})$, then we choose a uniform permutation of the 4 neighbors of $o_{\TT}$ to be mapped to the four neighbors of $o$ (some of them might be $o$ again, through the loop edges in $O$), then a uniform permutation of the 3 further neighbors of each of the 4 neighbors in $\TT$ to be mapped to the 3 further neighbors in $\C^{++}$, and so on. Now, the preimage of $\C^+$ ($\subset \C^{++}$) in $\TT$ by this map is $\FF$, while the preimage of $\C\cup O$ ($\subset \C^{++}$) is $\omega$. 

We also give a more hands-on definition. Let $(\T_0,o):=(\C^+,o)$. We will construct a subgraph $\FF\subset \TT$ with the property that every component of $\FF$ is isomorphic to $\T_0$, and moreover, every non-singleton component of $\TT\setminus E(\FF)$ is a 3-regular tree $R$ with the property that for every $x_1,x_2\in V(R)$
and $\FF$-components $\F_{x_i}$ of $x_i$ ($i=1,2$), the $(\F_1,x_1)$ and $(\F_2,x_2)$ are rooted isomorphic. Starting from $\T_0$, we will add edges, some of them marked to belong to $\FF$ and some of them not (so they will belong to $\TT\setminus E(\FF)$). We will denote the $\FF$-component of a vertex $x$ by $\F_x$ and its component in $\TT\setminus E(\FF)$ by $\R_x$. In particular, $\F_o=\T_0$.

Let $L$ be the set of leaves in $\T_0$, and let $\T_1:=\T_0\cup\bigcup_{v\in L}\R_v$, where the $\R_v$ are pairwise disjoint 3-regular trees with one vertex being $v$ and all other vertices being outside of $\T_0$. 
Define $\T_2:= \T_1\cup\bigcup_{v\in L}\bigcup_{w\in \cup V(\R_v), w\not=v} \F_w$, where every $(\F_w,w)$ is rooted isomorphic to the $(\F_v,v)$ where $w\in V(\R_v)$. 
Similarly, define $\T_{2n+1}$ to be $\T_{2n}$ with a new 3-regular tree attached to every leaf of $\T_{2n}$. Define $\T_{2n}$ from $\T_{2n-1}$ by attaching a new tree $\F_w$ to every new vertex $w$ of each of these 3-regular trees, such that if $R$ is such a 3-regular tree and the vertex of it that is contained in $T_{2n-2}$ is $v$, then $(\F_w,w)$ is rooted isomorphic to the $(\F_v,v)$.
The limit of the $(\T_n,o)$ is a 4-regular rooted tree, which we identify with $(\TT,o)$ via an independent uniform random permutation at each vertex, just as in the universal cover construction. Every $\F_x$ is isomorphic to $\T_0=\F_o$, hence we can identify the canopy subgraph of it (which is preserved by any automorphism of $\F_x$ almost surely); call it $\Can_x$.
Let $\FF:=\bigcup_x \F_x$.
Finally we are ready to define the percolation process $$\omega:=\bigcup_x \big(\R_x\cup \Can_x\big)$$ 
as the union of edges that either belong to a canopy copy or to a regular tree copy.

Consider now the decorated rooted graph $(\TT,o;\omega,\FF)$ (here $\omega$ and $\FF$ are viewed as decorations). 

\begin{proposition}\label{unimogyula}
The decorated rooted random graph $(\TT,o;\omega,\FF)$ is unimodular.
In particular, $\omega$ and $\FF$ are invariant percolations on the 4-regular tree. Moreover, $\omega$ is ergodic.
\end{proposition}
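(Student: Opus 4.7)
My strategy is to derive unimodularity of $(\TT, o; \omega, \FF)$ from unimodularity of the decorated rooted random graph $(\C^{++}, o; \C, O)$ via the universal cover picture, and to deduce ergodicity of $\omega$ from an essential-constancy property of the unrooted isomorphism class of $\C^+$ together with a factor-of-iid representation.

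First, I would verify unimodularity of the decorated graph $(\C^{++}, o; \C, O)$. The excerpt already establishes that $(\C^+, o)$ is unimodular with root distributed proportionally to $3^{-i}$ on level $\LL_i$. Attaching the three oriented loops at every leaf of $\C^+$ yields a 4-regular random graph on the same vertex set, and since loops contribute only self-mass, the mass-transport principle (MTP) lifts unchanged. Marking $\C$ and the loop set $O$ is an $\Aut$-equivariant deterministic decoration (as the excerpt observes, $\C$ can be recovered from $\C^+$ almost surely), so unimodularity is preserved.

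Next, I would lift unimodularity to the universal cover. The construction samples $(\TT, o_\TT)$ from $(\C^{++}, o)$ by choosing, at each newly explored vertex, a uniform random permutation of its neighbors to be matched with those in the image. A uniformly random $\TT$-neighbor $x$ of $o_\TT$ projects to a uniformly random neighbor $y$ of $o$ in $\C^{++}$ (counting loop half-edges appropriately). One can equally well start the cover construction at $(\TT, x)$ with image $(\C^{++}, y)$ using the same uniform rules; exchangeability of the permutation at the initial vertex, combined with the MTP for $\C^{++}$, yields the equality in distribution of $(\TT, o_\TT, x; \omega, \FF)$ and $(\TT, x, o_\TT; \omega, \FF)$. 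This is the MTP for $(\TT, o; \omega, \FF)$; since $\TT$ is a transitive graph, it is equivalent to $\Aut(\TT)$-invariance of $\omega$ and $\FF$ as percolations. The technically most delicate point is verifying that re-rooting the uniform cover at a neighbor preserves the joint distribution of the decorated cover---this is the main obstacle I foresee, and it requires careful bookkeeping of the uniform permutations together with the MTP downstairs.

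Finally, for ergodicity of $\omega$ I would argue as follows. By Borel--Cantelli applied to the independent Bernoullis $\xi_v$, almost surely every value $k \in \N$ is attained infinitely often by $m(x)$ as $x$ ranges over $\LL_0$. Since the automorphism group of $\C$ is very rich (transitive on each level, with ample freedom to permute subtrees of a common parent), this implies that any two realizations of $\C^+$ are isomorphic as unrooted graphs almost surely. Consequently, the $\Aut(\TT)$-invariant distribution of $\omega$ depends only on this almost surely constant unrooted isomorphism class, together with the iid uniform permutations at each $\TT$-vertex in the covering map. Thus $\omega$ is realized as a factor of iid on $\TT$, and factors of iid on a nonamenable transitive graph are ergodic (in fact mixing), giving the ergodicity of $\omega$.
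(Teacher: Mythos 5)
Your argument for the first two claims is essentially the paper's. The one difference is cosmetic: where you propose to verify the mass-transport principle for $(\TT,o;\omega,\FF)$ by carefully tracking the uniform permutations when re-rooting at a random neighbour, the paper sidesteps that bookkeeping by invoking the random-walk characterisation of unimodularity: a random covering map sets up a measure-preserving bijection between simple random walk paths on $\TT$ and on $\C^{++}$, so the random-walk criterion passes directly from $(\C^{++},o;\C,O)$ to $(\TT,o;\omega,\FF)$. Both routes are fine; the paper's is a bit cleaner because it avoids having to argue about the permutation at the starting vertex.

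\textbf{Ergodicity.} Here your route is genuinely different from the paper's, and it has a gap. The paper deduces ergodicity by first observing that the decoration $\C^+$ on the ergodic unimodular canopy $(\C,o)$ is a factor of iid (in the $\xi$-labels), so $(\C,o;\C^+)$ is ergodic by the Decoration Lemma of \cite{Ti}; then it transfers this to $(\C^+,o)$ via the absolute-continuity/conditioning relation between the two, and finally to $\omega$ via the measure-preserving bijection with $(\TT,o;\omega,\FF)$. You instead assert that the \emph{unrooted} isomorphism class of $\C^+$ is a.s.\ constant, and then argue that $\omega$ is a factor of iid on $\TT$. The first assertion is the problem. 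Borel--Cantelli does show that every value $k$ of $m(\cdot)$ occurs infinitely often, but that is only a (weak) necessary condition for two realisations to be isomorphic; it says nothing about how the $m$-values are arranged relative to the tree structure. The canopy with iid labels is closely analogous to $\Z$ with iid Bernoulli labels, which is shift-ergodic while its unrooted isomorphism class is far from a.s.\ constant (the orbit equivalence relation is non-smooth, so ``isomorphism class'' is not even a measurable function). Ergodicity of a unimodular random graph does \emph{not} imply that its unrooted isomorphism type is deterministic, and I do not see a correct argument for that here. Moreover, even granting the assertion, the factor-of-iid claim needs more care: the covering construction involves, besides the iid permutations at $\TT$-vertices, the choice of the random root $o\in \C^{++}$, which is not attached to any $\TT$-vertex, and the lift of the $\xi$-labels is constant on covering fibres rather than iid on $V(\TT)$. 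You should replace this part of the argument with the Decoration Lemma route: work with $(\C,o;\C^+)$, use that the decoration is a factor of the iid $\xi$-labels on the ergodic canopy, and then lift ergodicity up to $(\C^+,o)$ and to $\omega$.
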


\begin{proof}
We have obtained the decorated random rooted graph $(\C^{++},o;\C, O)$ from the unimodular random rooted graph $(\C^+,o;\C)$ via a very simple local modification that depends only on the rooted isometry class of the neighborhood of each vertex, hence it is also unimodular. Then, any instance of the random covering map from $\TT$ to $\C^{++}$ induces a natural measure preserving bijection between simple random walk paths on $\TT$ and on $\C^{++}$. Since the random walk criterion of unimodularity holds for $(\C^{++},o;\C, O)$, it also holds for $(\TT,o;\omega,\FF)$.

The second claim follows from Theorem 3.2 in \cite{AL}. 

For the ergodicity of $\omega$, first note that there is a measure-preserving bijection between $(\TT,o;\omega,\FF)$ and $(\C^+,o)$, and the action of each automorphism of $\TT$ on the former one induces just a rerooting in the latter one. Thus, if there was a non-trivial invariant property that $\omega$ satisfied, then the above bijection would translate it into a non-trivial rerooting-invariant property of $(\C^+,o)$, hence $(\C^+,o)$ would not be an extremal unimodular random rooted tree. However, $(\C^+,o)$ is in fact extremal (in other words, ergodic), for the following reason. Consider $\C$ in the construction of $\C^+$, and condition on the root being in $\C$, to obtain the decorated unimodular random graph $(\C,o;\C^+)$. The decoration here is a result of a factor of iid map, thus the decorated graph is also ergodic by the ``Decoration lemma'' (Lemma 2.2) of \cite{Ti} (which is stated for indistinguishability of percolation clusters, but is essentially the same as ergodicity of unimodular random graphs). We got $(\C,o;\C^+)$ from $(\C^+,o)$ (decorated with $\C$) by conditioning on $o\in\C$, hence the former is
absolutely continuous with respect to the latter, and it can be ergodic (extremal) only if the latter was also extremal.
\end{proof}

\section{Expansion properties of a component in the noised $\omega$-percolation}

For any $\eps>0$ and $\delta>0$, let $\eta^\eps$ and $\eta_\delta$ be independent Bernoulli bond percolation configurations on $E(\TT)$ of parameters $\eps$ and $\delta$ respectively. Define $\omps=(\omega\cup \eta^\eps)\setminus \eta_\delta$.
We will show that, if $\delta$ and $\eps$ are small enough, then the component of $o$ in $\omps$ is infinite and has no anchored expansion with positive probability. 


First we will examine the subgraph $\T_0$ as in the construction of the percolation; recall that $\T_0$ was sampled from $\C^+$, so by a slight abuse of notation we will identify the two and use references from the construction of $\C^+$.
Let $A$ be the event that $o$ is a leaf of type 0 in $\T_0$. We mention that a leaf of type 0 necessarily has to be in $\LL_0 \subset \C$. Conditioned on $A$, for every $n\in\N^+$ we will define a finite subgraph $H_n=H$ in $\omps\cap\FF$. Let $w(o)=w$ be the $3n$-grandparent of $o$. Let any $2n$-grandchild $v$ of $w$ be called {\it $n$-good} if $\xi_v=1$. If $v$ is $n$-good, then every $n$-grandchild of $v$ has type at least $n$, and thus the ternary subtree $T_v$ of $\C^+$ rooted in $v$ has depth at least $2n$: the first $n$ levels are in $\C$, and the remaining levels are in $\F_o\setminus \Can_o$. Let us denote by $A'_n$ the event that $A$ holds and at least one $n$-good $v$ exists. Note that
\begin{equation}\label{type}
\P (A'_n \given A) \geq 1-(1-4^{-n})^{3^{2n}} > 1-\exp(-(9/4)^n),
\end{equation}
and therefore, if $A'$ denotes the event that $\big\{A'_n$ occurs for all but finitely many $n \big\}$, then $\P( A' \given A ) = 1$. Note also that this $A'$ is dependent on the $\xi$-labels (or, in other words, on the actual sample $\T_0$ from $\C^+$), 
but not on $\eta^\eps$ or $\eta_\delta$. 

Condition on $A'$, and let $v=v(o)$ be an $n$-good vertex. 

Let $t_v$ be the $\omps$-component of $v$ in $T_v$ up until generation $n$. Conditioned on $A'$, this is the first $n$ generations of a branching process, whose mean offspring is $3(1-\delta)$. We will need a small large deviations lemma for such branching processes. It follows, for instance, from \cite{AN}, but we include here a direct proof for the sake of completeness.

\begin{lemma}\label{GWLD}
Consider a branching process $(Z_n)_{n=1}^\infty$ with offspring
distribution $X$ that has expectation $\mu>1$ and variance
$\sigma<\infty$. Fix any $\kappa \in (1,\mu)$. Then, there exists
$\lambda=\lambda(\mu,\sigma,\kappa)>0$ such that
$$
\P \big( Z_n < \kappa^n \bgiven Z_m>0 \textrm{ for all } m\ge 0 \big)
< \exp(-\lambda n),
$$
for all $n$ large enough. If $X \sim \mathsf{Binom}(3,1-\delta)$ and
$\kappa \in (1,3)$ is fixed, then $\lambda$ can be made arbitrarily large
by taking $\delta$ small enough.
\end{lemma}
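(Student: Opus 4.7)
The plan is to first reduce to the case of a branching process with no extinction using the Harris immortal-subtree decomposition, and then combine a generating-function ``escape from zero'' bound with a Cramer-type concentration bound, using truncation to supply the exponential moments that finite variance alone does not provide.

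For the reduction, condition on survival and pass to the immortal subtree $\tilde Z_n$, consisting of the vertices having at least one infinite line of descent. This is itself a branching process, with offspring PGF $\tilde f(s) = (f(q+(1-q)s) - q)/(1-q)$, where $f$ is the original offspring PGF and $q$ the extinction probability. One checks that $\tilde f'(1) = f'(1) = \mu$ is unchanged, $\tilde f(0) = 0$ (no extinction), and $\tilde\rho := \tilde f'(0) = f'(q)$ satisfies $\tilde\rho < 1$ because $\mu > 1$. Since $\tilde Z_n \leq Z_n$ pointwise, it suffices to prove the unconditional bound $\P(\tilde Z_n < \kappa^n) \leq e^{-\lambda n}$.

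For the escape step, the contraction of $\tilde f$ near $0$ gives $\tilde f_n(s) = O(\tilde\rho^n)$ for $s$ in compact subsets of $[0,1)$, and extracting power-series coefficients inductively yields $\P(\tilde Z_n = k) \leq \mathrm{poly}(k) \cdot \tilde\rho^n$ for each fixed $k$, and hence $\P(\tilde Z_n \leq K) \leq \mathrm{poly}(K) \cdot \tilde\rho^n$. For the concentration step, fix $\kappa' \in (\kappa, \mu)$ and truncate $\hat X := \min(\tilde X, M)$ with $M$ chosen so that $\hat\mu := \E[\hat X] > \kappa'$. Since $\tilde X \geq 1$, also $\hat X \geq 1$, and the coupled process $\hat Z_n \leq \tilde Z_n$ still has no extinction. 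Boundedness of $\hat X$ gives finite exponential moments, so Cramer's theorem applied to sums of $j$ iid copies of $\hat X$ yields $\P\bigl(\sum_{i=1}^{j} \hat X_i < \kappa' j\bigr) \leq e^{-cj}$ for some $c = c(M, \hat\mu, \kappa') > 0$. Iterating across scales, conditional on $\hat Z_m \geq K$ the truncated process grows geometrically at rate at least $\kappa'$ except with total failure probability bounded by $\sum_{k \geq m} \exp(-c(\kappa')^{k-m} K) = O(e^{-cK})$, the first term dominating by the doubly exponential decay of subsequent terms.

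Combining with $m = \alpha n$ and $K = \beta n$,
\[
\P(\tilde Z_n < \kappa^n) \leq \P(\hat Z_m \leq K) + \P(\tilde Z_n < \kappa^n,\, \hat Z_m > K) \leq \mathrm{poly}(n) \cdot \tilde\rho^{\alpha n} + O(e^{-c\beta n}),
\]
which is at most $e^{-\lambda n}$ for some $\lambda > 0$ after choosing $\alpha, \beta$. For the $\mathsf{Binom}(3, 1-\delta)$ case no truncation is needed, since $X$ is already bounded by $3$; direct computation gives $q = \delta^3 + O(\delta^4)$ and $\tilde\rho = 3\delta^2 + O(\delta^3)$, while the Cramer rate (from the Legendre transform of $\log \E[e^{tX}] = 3\log(\delta + (1-\delta)e^t)$) tends to $+\infty$ as $\delta \to 0$. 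Hence both summands above have rate tending to infinity, so $\lambda$ can be taken arbitrarily large. The principal obstacle is the gap between polynomial concentration (all that Chebyshev directly yields from finite variance) and the exponential concentration actually needed; truncation of the offspring to bounded support is the standard device that bridges this gap, while the Harris reduction is a second simplification that replaces the conditional-on-survival statement with a cleaner unconditional one.
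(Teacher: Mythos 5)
Your proof is essentially correct, but takes a genuinely different route from the paper's. The paper proceeds directly: it first uses the Paley--Zygmund inequality (the second moment method) to get $\P(Z_n \ge b\mu^n)\ge b$ for some $b>0$, and then runs a depth-first exploration of the tree conditioned on survival. Along the exploration, at each generation $i\le\alpha n$ one looks at the \emph{last} vertex of that generation to be visited; with conditional probability bounded below, this vertex leaves behind an unexplored child with infinite progeny, and these "reserve" subtrees are independent. Summing $\Omega(n)$ independent contributions, each of which exceeds $\kappa^n$ with probability $\ge b$, gives the exponential bound. Your argument instead reduces to the Harris immortal skeleton $\tilde Z_n$, then splits into an "escape from zero" phase controlled via the generating function and an amplification phase controlled by truncation and Cram\'er, with a geometric iteration across scales. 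Both arguments are valid; the paper's is shorter and more self-contained, while yours is more modular and, notably, does not actually use the finite-variance hypothesis anywhere (the generating-function bound and Cram\'er for a bounded truncation need only $\mu>1$), so it proves a slightly stronger statement. The paper genuinely uses $\sigma<\infty$ through Paley--Zygmund.

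Two small points you should tighten. First, the claimed bound $\P(\tilde Z_n\le K)\le\mathrm{poly}(K)\,\tilde\rho^n$ is not what the generating-function argument gives; Markov's inequality yields $\P(\tilde Z_n\le K)\le s^{-K}\tilde f_n(s)\le C(s)\,(s^{-\beta}\tilde\rho)^n$ for $K=\beta n$ and fixed $s\in(0,1)$, which is exponential in $K$ rather than polynomial, but still works because $\beta$ can be taken small enough that $s^{-\beta}\tilde\rho<1$. Second, in the final decomposition you bound $\P(\hat Z_m\le K)$, but the escape-from-zero estimate was stated for the untruncated process $\tilde Z_m$; since truncation \emph{decreases} the process, $\hat Z_m\le\tilde Z_m$, the inequality you'd like to invoke points the wrong way. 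This is easily repaired: the truncated offspring $\hat X$ also satisfies $\hat X\ge1$ and $\P(\hat X=1)=\P(\tilde X=1)=\tilde\rho$ (as long as $M\ge2$), so $\hat Z$ has no extinction, $\hat f(0)=0$, $\hat f'(0)=\tilde\rho$, and the same generating-function bound applies directly to $\hat Z_m$. With those repairs, the proof and the $\delta\to0$ asymptotics for $\mathsf{Binom}(3,1-\delta)$ (extinction $q=\delta^3+O(\delta^4)$, $\tilde\rho=3\delta^2+O(\delta^3)$, Cram\'er rate diverging) are all sound.
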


Let us remark that one can not generally get a bound that is better than exponential in $n$. For instance, in the case of $X\sim \mathsf{Binom}(3,1-\delta)$, the first $\alpha n$ generations for any $\alpha\in (0,1)$ could always be just one child, which happens with an exponentially small probability and reduces the size of $Z_n$ by an exponential factor.

\begin{proof} 
We start by recalling a much weaker bound, using just the second moment method (see, e.g., \cite[Exercise 12.12]{PGG}). The first moment is $\E Z_n = \mu^n$. Regarding the variance, 
$$\Var(Z_n) = \E \big( \Var(Z_n \given Z_{n-1}) \big)+ \Var \big( \E (Z_n \given Z_{n-1}) \big)= \sigma^2 \mu^{n-1} + \mu^2 \Var(Z_{n-1}).$$ 
Writing $\gamma_n := \Var (Z_n) / \mu^{2n}$, we get the recursion $\gamma_n = \sigma^2 / \mu^{n+1} + \gamma_{n-1}$, and thus $\lim_{n\to\infty} \gamma_n = \sigma^2 / (\mu^2-\mu)$. Therefore, $\Var(Z_n)\sim\sigma^2 \mu^{2n} / (\mu^2-\mu) \asymp (\E Z_n)^2$. By the Paley-Zygmund inequality, there exists $b=b(\mu,\sigma)>0$ such that, for all $n\ge 0$,
\begin{equation}\label{PZ}
\P(Z_n \geq b \mu^n) \geq b\,.
\end{equation}
When $X\sim \mathsf{Binom}(3,1-\delta)$ with $\delta$ small, then $\sigma$ is small, hence, using Chebyshev's inequality instead of Paley-Zygmund, we can make $b$ arbitrarily close to 1. 

Consider now the depth-first exploration of the tree, as in
\cite[Figure 12.4]{PGG}: when a vertex in the depth-first order is
examined, its entire offspring is revealed. Under the conditioning
that the tree is infinite, every generation $i\ge 0$ has a last time
when it is visited by this exploration; denote by $v_i$ the vertex at
which this happens, and by $X_i$ the offspring of $v_i$. Let $E_i$
denote the event that $X_i \ge 2$ and the first child of $v_i$ that
gets examined by the exploration has an infinite offspring. In this
case, the exploration leaves at least one child of $v_i$ unexplored. 
If we denote the sigma-algebra generated by $\{ E_i : i=0,1,\dots,j\}$ by $\cE_j$, then 
$$
\P(E_{j+1} \given \cE_j) \geq q := \P(X \ge 2) \,  \P(Z_m>0 \ \forall m\ge 0) > 0.
$$
When $X\sim \mathsf{Binom}(3,1-\delta)$, this $q$ converges to 1 as $\delta\to 0$. Therefore, for any $\alpha\in (0,1)$, the probability that out of $\{ E_i, \ i=0,1,\dots,\alpha n\}$ less than $\alpha n q/ 2$ events will occur is smaller than 
\begin{equation}\label{binom}
\P\big(\mathsf{Binom}(\alpha n,q) < \alpha n q/2\big) \le \exp(-c n),
\end{equation} 
with some $c=c(\alpha,q)>0$, which goes to infinity as $q\to 1$. Let $I$ be the set of indices $i \in  \{0,1,\dots,\alpha n\}$ for which $E_i$ occurs, and condition on the event $\big\{ | I | \ge \alpha n q/2 \big\}$.

For each $i\in I$, denote the progeny of the unexplored second child by $\big\{ Z^{(i)}_j : j\ge 0 \big\}$. The main idea is that 
\begin{equation}\label{Zsum}
Z_n \geq \sum_{i\in I} Z^{(i)}_{n-1-i}\,,
\end{equation} 
where the summands are independent. By~(\ref{PZ}), we have $\P \big( Z^{(i)}_{n-1-i} < \kappa^n\big) < 1-b$ if $\kappa^n \leq b \mu^{n-1-i}$, which does hold for all $i\leq \alpha n$ whenever $\alpha>0$ is small enough so that $\kappa < \mu^{1-\alpha}$, and when $n$ is large enough. Combining~(\ref{PZ}),~(\ref{binom}), and~(\ref{Zsum}),
$$
\P( Z_n < \kappa^n ) \leq \exp(-cn) + (1-b)^{\alpha n q / 2} < \exp(-\lambda n),
$$
for some $\lambda > 0$ and all $n$ large enough. For $X\sim \mathsf{Binom}(3,1-\delta)$ as $\delta\to 0$, we have $c\to\infty$, $b\to 1$, and $q\to 1$, while $\alpha$ is fixed by $\kappa$ and $\mu$, hence we can take $\lambda\to\infty$.
\end{proof}
\medskip

Getting back to the analysis of our process, Lemma~\ref{GWLD} implies that, for $\delta>0$ small enough,
\begin{equation}\label{tree}
\P \bigl(|t_v|< 2^n \bgiven A' \bigr)\leq 2^{-n}.
\end{equation}
If there is a path from $v$ to a leaf in $T_v$ then define $t_v^+:=t_v$, otherwise define $t_v^+$ as the $\omega^\eps_\delta$ component of $v$ in $T_v$. Since $t_v\subset t_v^+$, \eqref{tree} remains valid with $t_v$ replaced by $t_v^+$.
Let $P_v$ be the path between $o$ and $v$. We have
\begin{equation}\label{path}
\P (P_v \subset\omps \given \omega )\geq (1-\delta)^{5n},
\end{equation}
for almost every $\omega\in A'$.
Putting these together, we obtain that, conditioned on $A'$, for $\delta$ small enough,
with probability at least $(1-\delta)^{5n}-2^{-n}>4^{-n}$,
we have that $P_v\subset \omps$ and $|t_v^+|>2^n$ . Call this event $B_n$. Conditioned on $B_n$, define $H:=P_v\cup t_v^+ \subset \omega^\eps_\delta$. 
We have just seen $\P(B_n \given A')> 4^{-n}$ and that, conditioned on $B_n$,
\begin{equation}\label{H}
|H|\geq 5n+2^n.
\end{equation}
Next we find an upper bound on the size of the boundary $\partial H$ of $H$ {\it inside} $\omps$. 
For any fixed $u$ of $T_v\cap\LL_0$, the probability that $T_u\cap \omps$ has a path from $u$ to distance $n$ in $T_v\setminus E(\C)$ is trivially bounded by $\eps^n 3^n$. 
If this event does not happen for any $u$, then the boundary of $t_v^+$ in $\omps$ consists only of the single edge of $P_v$ incident to $v$. By a union bound we conclude 
\begin{equation}\label{boundary}
\P\big(|\partial H|> 2|P_v|+1 \bgiven B_n \bigr)\leq \P \big( |\partial t^+_v|>1 \bgiven  B_n \big)
\leq \eps^n 3^n |T_v\cap\LL_0|\leq \eps^n 9^n.
\end{equation}
To summarize, for $\delta,\eps$ small enough we have just obtained the following:

\begin{proposition}\label{propolisz}
Conditioned on $A'$, for all but finitely many $n\in\N^+$, 
with probability at least $c_n:=\P(B_n \given A')-\eps^n 9^n\geq 8^{-n}$ 
there exists an $H\subset\omps\cap\FF$
such that $o\in H$, and 
\begin{equation}
|\partial H|\leq 10n+1
\,\,\,\,\,\,{\text{ and }}\,\,\,\,\,\,
|H|\geq 5n+2^n.
\end{equation}
\end{proposition}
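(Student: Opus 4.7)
My plan is to aggregate the estimates already derived above (Lemma~\ref{GWLD} and equations~\eqref{tree}--\eqref{boundary}) into the single probabilistic statement of the proposition. Conditioning on $A'$, pick an $n$-good vertex $v=v(o)$ (which exists by $A'_n$) and set $H:=P_v\cup t_v^+\subset \F_o\subset\FF$. On the event $B_n$ the path is in $\omps$ by the definition of $B_n$, and the subtree is in $\omps$ by the definition of $t_v^+$; since $P_v$ and $t_v^+$ share only the vertex $v$, one has $|H|\geq (5n+1)+2^n-1=5n+2^n$, which is exactly \eqref{H}.

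Next I would bound $|\partial H|$ (inside $\omps$) by splitting boundary edges according to whether their $H$-endpoint lies in $P_v$ or in $t_v^+\setminus\{v\}$. The $P_v$-contribution is at most $10n+1$ \emph{deterministically}: at the leaf $o$ there are at most $3$ non-path edges in $\TT$ (the lifts of the three loops added at $o$, leading into $\R_o$); at each of the $5n-1$ interior vertices of $P_v$ there are only $2$ non-path edges (all $\C$-edges leading to $\C^+$-subtrees that sit outside $H$); and at $v$ the three non-path edges all lie in $T_v$, so any of them that is in $\omps$ is automatically absorbed into $t_v^+\subset H$ and contributes nothing. This yields $\leq 3+2(5n-1)+0=10n+1$. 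The $t_v^+$-contribution is controlled by \eqref{boundary}: via a union bound over the $\leq 3^n$ canopy leaves $u\in T_v\cap\LL_0$ and the $\leq 3^n$ depth-$n$ paths in each attached ternary tree $T_u$, with conditional probability at least $1-\eps^n 9^n$ on $B_n$ the only external $\omps$-edge of $t_v^+$ is the $P_v$-edge at $v$, which is already inside $H$ and so does not add to $\partial H$.

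Putting the two pieces together, the event of the proposition contains $B_n$ intersected with the good event of \eqref{boundary}, so its conditional probability given $A'$ is at least $\P(B_n\given A')-\eps^n 9^n \geq 4^{-n}-(9\eps)^n$. For $\eps$ small enough that $9\eps<1/8$ this is $\geq 4^{-n}-8^{-n}=8^{-n}(2^n-1)\geq 8^{-n}$ for every $n\geq 1$, which gives $c_n\geq 8^{-n}$ (with the ``all but finitely many'' clause absorbing the asymptotic thresholds already used in \eqref{tree} and in the comparison $(1-\delta)^{5n}-2^{-n}>4^{-n}$). I do not anticipate any genuine obstacle; the only step needing real care is the boundary bookkeeping at the endpoints $o$ and $v$, especially the observation that the three non-path edges at $v$ always live inside $t_v^+\cup\{\text{not in }\omps\}$ and hence never contribute to $\partial H$.
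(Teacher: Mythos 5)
Your proposal is correct and follows the paper's own argument almost step for step: $H=P_v\cup t_v^+$, the size bound from $B_n$, the deterministic $P_v$-boundary count, the $\eps^n 9^n$ union bound on $\partial t_v^+$, and the final numerical comparison giving $c_n\ge 8^{-n}$. The only cosmetic difference is that you spell out the per-vertex accounting (3 at $o$, 2 at each interior vertex, 0 at $v$) where the paper simply writes $2|P_v|+1$; both yield $10n+1$.
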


Say that $o$ is $n${\it -nice} (or just {\it nice}), when $H$ as in Proposition \ref{propolisz} exists.
Recall the construction of $\T_1$ and $\T_2$: if we pick any vertex $x$ of $\R_o$, then $(\F_x,x)$ is rooted isomorphic to $(\F_o,o)$. Hence conditioning on $A'$ means that an analogous event holds for $x$ as well, namely, $x$ is a leaf (of type 0) in $\F_x$ and there is an $n$-good vertex for it. Hence we can define $x$ to be $n$-nice just the way we defined it for $o$. Moreover, for all $x\in V(\R_o)$ the events of $x$ being nice are conditionally independent of each other, because they are determined by $\eta^\eps$ and $\eta_\delta$ on disjoint edge sets (the $\F_x$).

Now consider $\CC:=\R_o\cap\omps$. Let $E$ be the event that $o$ is in an infinite component of $\CC$, and let $\BB_r$ be the ball of radius $r$ around $o$ in this component. By Lemma~\ref{GWLD}, we have that
$\P(|\BB_r|> 2^r \given E)$ tends to 1, exponentially fast in $r$. Then 
\begin{align*}
\P\big( \textrm{there is no $n$-nice point in } \BB_r  \bgiven E\cap A' \big) \leq \P\big( |\BB_r|< 2^r \bgiven E\cap A' \big)+(1-c_n)^{2^r},
\end{align*}
with $c_n \ge 8^{-n}$ from the proposition. Choosing $r=r_n=n^2$, say, this quantity tends to 0, superexponentially fast in $n$. We can conclude that on $E\cap A'$, almost surely for all but finitely many $n\in\N^+$ there is an $n$-nice vertex $x_n$ at distance at most $n^2$ from $o$ in $\CC$. Then consider the $H=H_n(x_n)$ from Proposition~\ref{propolisz} that corresponds to this $x_n$ in $\F_{x_n}$. Let $Q_n$ be the path in $\CC$ between $o$ and $x_n$, and define $K_n=Q_n\cup H_n$. Then, using the proposition, we have
\begin{equation}
|\partial K_n|\leq 12n+n^2
\,\,\,\,\,\,{\text{ and }}\,\,\,\,\,\,
|K_n|\geq 5n+2^n.
\end{equation}
This shows that $K_n$ is an anchored F{\o}lner sequence in $\omps$, i.e., satisfies $|\partial K_n|/|K_n|\to 0$, finishing our proof.

%
%

 \section*{Acknowledgements}
This research was partially supported by the ERC Consolidator Grant 772466 ``NOISE''. The second author was also supported by Icelandic Research Fund, Grant Number: 185233-051.

\end{document}